
\documentclass[a4paper,12pt,reqno]{amsart}
\usepackage{eurosym}
\usepackage{amsfonts}
\usepackage{amsthm}
\usepackage{graphicx, color}
\usepackage{amssymb}
\usepackage{amsmath}

\setcounter{MaxMatrixCols}{10}

\setlength{\textwidth}{16cm}
\setlength{\textheight}{21.5cm} \setlength{\oddsidemargin}{0cm}
\setlength{\topmargin}{-0.5cm} \setlength{\evensidemargin}{0cm}
\setlength{\footskip}{14pt}
\nonstopmode \numberwithin{equation}{section}

\newtheorem{theorem}{Theorem}

\newtheorem{lemma}{Lemma}[section]

\allowdisplaybreaks

\begin{document}
\title[Properties of generalized multiindex Bessel function]
{Fractional order integration and certain integrals of generalized multiindex Bessel function}

\author{K.S. Nisar$^1$, S.D. Purohit$^{2, *}$, D.L. Suthar$^{3}$, Jagdev Singh$^{4}$}

\address{$^1$Department of Mathematics, College of Arts and
Science-Wadi Al dawaser, Prince Sattam bin Abdulaziz University, Riyadh
region 11991, Saudi Arabia.}
\email{ksnisar1@gmail.com, n.sooppy@psau.edu.sa}

\address{$^2$Department of HEAS (Mathematics), Rajasthan Technical
University, Kota-324010, India.}
\email{sunil\_a\_purohit@yahoo.com}

\address{$^3$Department of Mathematics, Wollo University, Dessie, Ethiopia}
\email{dlsuthar@gmail.com}

\address{$^4$Department of Mathematics, JECRC University, Jaipur, India}
\email{jagdevsinghrathore@gmail.com}

\subjclass[2000]{33C20; 33B15; 26A33}
\thanks{$^{*}$ Corresponding author}
\keywords{Generalized (Wright) hypergeometric functions, generalized multiindex Bessel function, fractional calculus, integral formulas.}

\begin{abstract}
We aim to introduce the generalized multiindex Bessel function $J_{\left( \beta _{j}\right) _{m},\kappa ,b}^{\left( \alpha _{j}\right)_{m},\gamma ,c}\left[ z\right] $ and to present some formulas of the Riemann-Liouville fractional integration and differentiation operators. Further, we also derive certain integral formulas involving the newly defined generalized multiindex Bessel function $J_{\left( \beta _{j}\right) _{m},\kappa ,b}^{\left( \alpha _{j}\right)_{m},\gamma ,c}\left[ z\right] $. We prove that such integrals are expressed in terms of the Fox-Wright function $_{p}\Psi_{q}(z)$. The results presented here are of general in nature and easily reducible to new and known results.
\end{abstract}

\maketitle

\section{Introduction and Preliminaries}\label{sec-1}
Fractional calculus, which has a long history, is an important branch of mathematical
analysis (calculus) where differentiations and integrations can be of arbitrary
non-integer order. The operators of Riemann-Liouville fractional integrals and derivatives are defined, for $\alpha \in \mathbb{C}\,\,(\Re(\lambda) > 0)$ and $x>0$
( see, for details, \cite{KST,Samko})%
\begin{equation}
\left( I_{0+}^{\lambda }f\right) \left( x\right) =\frac{1}{\Gamma \left(
\lambda \right) }\int_{0}^{x}\frac{f\left( t\right) }{\left( x-t\right)
^{1-\lambda }}dt\text{ \ },  \label{FI-1}
\end{equation}%
\begin{equation}
\left( I_{-}^{\lambda }f\right) \left( x\right) =\frac{1}{\Gamma \left(
\lambda \right) }\int_{x}^{\infty }\frac{f\left( t\right) }{\left(
t-x\right) ^{1-\lambda }}dt\text{ \ },  \label{FI-2}
\end{equation}%
\begin{eqnarray}
\left( D_{0+}^{\lambda }f\right) \left( x\right) &=&\left( \frac{d}{dx}%
\right) ^{\left[ \Re\left( \lambda \right) \right] +1}\left(
I_{0+}^{1-\lambda +\left[ \Re\left( \lambda \right) \right] }f\right)
\left( x\right)  \notag \\
&=&\left( \frac{d}{dx}\right) ^{\left[ \Re\left( \lambda \right) %
\right] +1}\frac{1}{\Gamma \left( 1-\lambda +\Re\left[ \lambda \right]
\right) }\int_{0}^{x}\frac{f\left( t\right) }{\left( x-t\right) ^{\lambda -%
\left[ \Re\left( \lambda \right) \right] }}\text{ \ \ }
\end{eqnarray}%
and
\begin{eqnarray}
\left( D_{-}^{\lambda }f\right) \left( x\right) &=&\left( -\frac{d}{dx}%
\right) ^{\left[ \Re\left( \lambda \right) \right] +1}\left(
I_{-}^{1-\lambda +\left[ \Re\left( \lambda \right) \right] }f\right)
\left( x\right)  \notag \\
&=&\left( -\frac{d}{dx}\right) ^{\left[ \Re\left( \lambda \right) +1%
\right] }\frac{1}{\Gamma \left( 1-\lambda +\left[ \Re\left( \lambda
\right) \right] \right) }\int_{x}^{\infty }\frac{f\left( t\right) }{\left(
t-x\right) ^{\lambda -\left[ \Re\left( \lambda \right) \right] }}dt~\
  \label{FD-2}
\end{eqnarray}%
respectively,  where $\left[ \Re\left( \lambda \right) \right] $ is the
integral part of $\Re\left( \lambda \right)$. The following lemma is needed in sequel \cite[(2.44)]{Samko},

\begin{lemma}
\label{Lem1} Let $\lambda \in \mathbb{C}$ $\left( \Re\left( \lambda
\right) >0\right) $ and $\delta \in \mathbb{C}$ then

$\left( a\right) ~$If $\ \Re\left( \delta \right) >0$ then
\begin{equation}
\left( I_{0+}^{\lambda }t^{\delta -1}\right) \left( x\right) =\frac{\Gamma
\left( \delta \right) }{\Gamma \left( \lambda +\delta \right) }x^{\lambda
+\delta -1}.  \label{Lem-a}
\end{equation}

$\left(b\right)~$If $\ \Re\left( \delta \right) >\Re\left(
\lambda \right) >0$ then%
\begin{equation}
\left( I_{-}^{\lambda }t^{-\delta }\right) \left( x\right) =\frac{\Gamma
\left( \delta -\lambda \right) }{\Gamma \left( \delta \right) }x^{\lambda
-\delta }.  \label{Lem-b}
\end{equation}
\end{lemma}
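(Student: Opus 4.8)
The plan is to evaluate each fractional integral directly by reducing it to an Euler Beta integral. For part $(a)$, I would start from the definition \eqref{FI-1}, so that
\[
\left( I_{0+}^{\lambda }t^{\delta -1}\right)\left( x\right)=\frac{1}{\Gamma(\lambda)}\int_{0}^{x}t^{\delta-1}\left( x-t\right)^{\lambda-1}\,dt ,
\]
and then make the substitution $t=xu$, $u\in(0,1)$. This factors every power of $x$ out of the integral and leaves $\dfrac{x^{\lambda+\delta-1}}{\Gamma(\lambda)}\displaystyle\int_{0}^{1}u^{\delta-1}(1-u)^{\lambda-1}\,du$. The remaining integral is exactly $B(\delta,\lambda)$, which converges precisely because $\Re(\delta)>0$ (integrability at $u=0$) and $\Re(\lambda)>0$ (integrability at $u=1$); invoking $B(\delta,\lambda)=\Gamma(\delta)\Gamma(\lambda)/\Gamma(\lambda+\delta)$ then yields \eqref{Lem-a}.

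For part $(b)$ I would proceed analogously, starting from \eqref{FI-2}:
\[
\left( I_{-}^{\lambda }t^{-\delta }\right)\left( x\right)=\frac{1}{\Gamma(\lambda)}\int_{x}^{\infty }t^{-\delta}\left( t-x\right)^{\lambda-1}\,dt ,
\]
and using the reciprocal substitution $t=x/u$, under which $t=x$ corresponds to $u=1$, $t\to\infty$ to $u\to 0^{+}$, and $dt=-x\,u^{-2}\,du$. A short computation turns $t^{-\delta}(t-x)^{\lambda-1}\,dt$ into $-\,x^{\lambda-\delta}\,u^{\delta-\lambda-1}(1-u)^{\lambda-1}\,du$, the sign being absorbed by reversing the limits, so the expression becomes $\dfrac{x^{\lambda-\delta}}{\Gamma(\lambda)}\displaystyle\int_{0}^{1}u^{\delta-\lambda-1}(1-u)^{\lambda-1}\,du=\dfrac{x^{\lambda-\delta}}{\Gamma(\lambda)}\,B(\delta-\lambda,\lambda)$. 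Here the hypothesis $\Re(\delta)>\Re(\lambda)>0$ is exactly what is needed: $\Re(\lambda)>0$ handles the endpoint $u=1$ (equivalently $t=x$), while $\Re(\delta-\lambda)>0$ handles $u=0$ (equivalently the decay $t^{-\delta}(t-x)^{\lambda-1}\sim t^{\lambda-\delta-1}$ as $t\to\infty$, needed for the original improper integral to converge). Evaluating the Beta function as before gives \eqref{Lem-b}.

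I do not expect a genuine obstacle here; the statement is classical. The only point demanding care is the bookkeeping of convergence — tracking which of the two parameter inequalities guarantees integrability at which endpoint — together with the sign and orientation of the limits produced by the decreasing change of variable $t=x/u$ in part $(b)$. One could alternatively deduce $(b)$ from $(a)$ via the same reciprocal-argument change of variable applied at the level of the operators, but the direct evaluation above is the shortest route.
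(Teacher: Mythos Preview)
Your argument is correct and is the standard derivation: reduce each fractional integral to a Beta integral via $t=xu$ (for $(a)$) or $t=x/u$ (for $(b)$), and then use $B(p,q)=\Gamma(p)\Gamma(q)/\Gamma(p+q)$. The convergence bookkeeping is also right.

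Note, however, that the paper does not actually prove Lemma~\ref{Lem1}: it is stated without proof and attributed to \cite[(2.44)]{Samko}. So there is no ``paper's own proof'' to compare against; your proposal simply supplies the classical justification that the authors chose to cite rather than reproduce.
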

In this paper, we aim to introduce a new generalized multiindex Bessel function and to study its compositions with the classical Riemann-Liouville fractional integration and differentiation operators. Further, we derive certain integral formulas involving the newly defined generalized multiindex Bessel function $\mathcal{J}_{\left( \beta _{j}\right) _{m},\kappa ,b}^{\left( \alpha _{j}\right)_{m},\gamma ,c}\left[ z\right] $. We prove that such integrals are expressed in terms of the Fox-Wright function $_{p}\Psi_{q}(z)$.
\section{Fractional Calculus Approach of $\mathcal{J}_{\left( \beta _{j}\right) _{m},\kappa ,b}^{\left( \alpha _{j}\right)_{m},\gamma ,c}\left[ z\right] ~$}
In this section, we introduce a generalized multiindex Bessel function $\mathcal{J}_{\left( \beta _{j}\right) _{m},\kappa ,b}^{\left( \alpha _{j}\right)_{m},\gamma ,c}\left[ z\right] ~$  as follows:

\noindent For $\alpha _{j},\beta _{j},\gamma ,b,c\in \mathbb{C~}\left(j=1,2,...,m\right) $ be such that $\sum\limits_{j=1}^{m}\Re \left( \alpha_{j}\right) >\max 0;$ $ \left\{\Re \left( \kappa \right) -1\right\} ;\kappa>0,\Re \left( \beta _{j}\right) >0$ and $\Re (\gamma )>0$, then

\begin{equation}
\mathcal{J}_{\left( \beta _{j}\right) _{m},\kappa ,b}^{\left( \alpha _{j}\right)
_{m},\gamma ,c}\left[ z\right] =\sum_{n=0}^{\infty }\frac{c^{n}\left( \gamma
\right) _{\kappa n}}{\prod\limits_{j=1}^{m}\Gamma \left( \alpha _{j}n+\beta
_{j}+\frac{b+1}{2}\right) }\, \frac{z^{n}}{n!}~~\left( m\in
\mathbb{N}\right).  \label{GMIBF}
\end{equation}%
Here and in the following, $(\lambda)_\nu$ denotes the Pochhammer symbol  defined (for $\lambda,\,\nu \in \mathbb{C}$), in terms of the Gamma function $\Gamma$ (see \cite[Section 1.1]{Sr-Ch-12}), by
\begin{equation}\label{Poch-symbol}
  (\lambda)_\nu := \frac{\Gamma (\lambda +\nu)}{ \Gamma (\lambda)}
 =\left\{\aligned & 1  \hskip 40 mm (\nu=0;\,\, \lambda \in \mathbb{C}\setminus \{0\}) \\
        & \lambda (\lambda +1) \cdots (\lambda+n-1) \hskip 6mm (\nu=n \in {\mathbb N};\,\, \lambda \in \mathbb{C}).
   \endaligned \right. \\
\end{equation}
\subsection{Fractional Integration}\label{sec-2} 
We first recall the definition of the Fox-Wright function $_{p}\Psi_{q}(z)$ $(p,\, q\in {\mathbb N}_0)$ (see, for details, \cite{Fox3, Wright12}):
  \begin{align}\label{Fox-Wright}
&_{p}\Psi_{q} \left[\begin{array}{c}
 (\alpha_{1},A_{1}),\ldots,(\alpha_{p}, A_{p});          \\
 (\beta_{1},B_{1}),\ldots,(\beta_{q}, B_{q});           \\
\end{array} z \right]= \sum_{n=0}^{\infty} \frac{\Gamma(\alpha_{1}+A_{1}n)\cdots\Gamma(\alpha_{p}+A_{p}n)}{\Gamma(\beta_{1}+B_{1}n)\cdots\Gamma(\beta_{q}+B_{q}n)} \;\frac{z^{n}}{n!}
\end{align}
$$ \left(A_{j} \in \mathbb{R}^+\;(j=1,\ldots,p);\;B_{j}\in \mathbb{R}^+\;(j=1,\ldots,q);\;1+\sum_{j=1}^{q}B_{j}- \sum_{j=1}^{p} A_{j}\geqq0\right), $$
where the equality in the convergence condition holds true for
$$|z|<\nabla:=\left(\prod\limits_{j=1}^{p}A_{j}^{-A_{j}}\right)\;.
\;\left(\prod\limits_{j=1}^{q}B_{j}^{B_{j}}\right).$$

Now we present the Riemann-Liouville fractional integration of the generalized multiindex Bessel function $\mathcal{J}_{\left( \beta _{j}\right) _{m},\kappa ,b}^{\left( \alpha _{j}\right)_{m},\gamma ,c}\left[ z\right] $ in the following theorems.
\begin{theorem}
\label{Th-1} Let $\lambda ,\delta \in \mathbb{C}$ be such that $\Re\left(
\lambda \right) >0,\Re\left( \delta \right) >0$ and the conditions
given in $\left( \ref{GMIBF}\right) $ is satisfied, then for $x>0$, the
following integral formula holds true%
\begin{equation}
\left( I_{0+}^{\lambda }\left\{t^{\delta -1}\mathcal{J}_{\left( \beta _{j}\right)
_{m},\kappa ,b}^{\left( \alpha _{j}\right) _{m},\gamma ,c}\left( t\right)
\right\} \right) \left( x\right) =\frac{x^{\lambda +\delta -1}}{\Gamma \left(
\gamma \right) }~_{2}\Psi _{m+1}\left[
\begin{array}{c}
\left( \gamma ,k\right) ,\left( \delta ,1\right) \\
\left( \beta _{j}+\frac{b+1}{2},\alpha _{j}\right) _{j=1}^{m},\left( \lambda
+\delta ,1\right)%
\end{array}%
\left\vert cx\right. \right] .  \label{Th1-Eqn1}
\end{equation}%
\end{theorem}

\begin{proof}
Let us denote the left-hand side of $\left( \ref{Th1-Eqn1}%
\right) $ by $\mathcal{I}_{1}$. Using the definition $\left( \ref{GMIBF}\right) $,  we have
\begin{eqnarray}
\mathcal{I}_{1} &=&\left( I_{0+}^{\lambda }\left\{ t^{\delta -1}\mathcal{J}_{\left(
\beta _{j}\right) _{m},\kappa ,b}^{\left( \alpha _{j}\right) _{m},\gamma
,c}\left( t\right)\right\} \right) \left( x\right)  \notag \\
&=&\left( I_{0+}^{\lambda }\left\{ t^{\delta -1}\sum_{n=0}^{\infty }\frac{%
c^{n}\left( \gamma \right) _{\kappa n}}{\prod\limits_{j=1}^{m}\Gamma \left(
\alpha _{j}n+\beta _{j}+\frac{b+1}{2}\right) }\frac{t^{n}}{n!}\right\}
\right) \left( x\right).  \label{Th1-Eqn2}
\end{eqnarray}%
Interchanging the integration and the summation in $\left( \ref{Th1-Eqn2}%
\right) $ and using the definition of Pochhammer symbol $\left( \ref{Poch-symbol}\right) $, we get%
\begin{equation*}
\mathcal{I}_{1}=\sum_{n=0}^{\infty }\frac{c^{n}\Gamma \left( \gamma +\kappa
n\right) }{\Gamma \left( \gamma \right) \prod\limits_{j=1}^{m}\Gamma \left(
\alpha _{j}n+\beta _{j}+\frac{b+1}{2}\right) n!}\left( I_{0+}^{\lambda
}t^{\delta +n-1}\right) \left( x\right).
\end{equation*}%
Applying the relation $\left( \ref{Lem-a}\right) $ in Lemma \ref{Lem1}, we get
\begin{equation*}
\mathcal{I}_{1}=\sum_{n=0}^{\infty }\frac{c^{n}\Gamma \left( \gamma +\kappa
n\right) }{\Gamma \left( \gamma \right) \prod\limits_{j=1}^{m}\Gamma \left(
\alpha _{j}n+\beta _{j}+\frac{b+1}{2}\right) n!}\frac{\Gamma \left( \delta
+n\right) }{\Gamma \left( \lambda +\delta +n\right) }x^{\lambda +\delta
+n-1}.
\end{equation*}%
In view of the definition of the Fox-Wright function $\left( \ref{Fox-Wright}\right) $, we arrived at the desired
result.
\end{proof}

\begin{theorem}
\label{Th-2} Let $\lambda ,\delta \in \mathbb{C}$ such that $\Re\left(
\delta \right) >\Re\left( \lambda \right) >0$ and the conditions given
in $\left( \ref{GMIBF}\right) $ is satisfied, then for $x>0$, the following
integral formula holds true%
\begin{equation}
\left( I_{-}^{\lambda }\left\{ t^{-\delta }\mathcal{J}_{\left( \beta _{j}\right)
_{m},\kappa ,b}^{\left( \alpha _{j}\right) _{m},\gamma ,c}\left( \frac{1}{t}%
\right) \right\} \right) \left( x\right) =\frac{x^{\lambda -\delta }}{\Gamma
\left( \gamma \right) }~_{2}\Psi _{m+1}\left[
\begin{array}{c}
\left( \gamma ,k\right) ,\left( \delta -\lambda ,1\right) \\
\left( \beta _{j}+\frac{b+1}{2},\alpha _{j}\right) _{j=1}^{m},\left( \delta
,1\right)%
\end{array}%
\left\vert \frac{c}{x}\right. \right] .  \label{Th2-Eqn1}
\end{equation}
\end{theorem}

\begin{proof}
Denoting the left-hand side of $\left( \ref{Th1-Eqn2}\right) $ by $\mathcal{I%
}_{2}$. Using $\left( \ref{GMIBF}\right) ,$ we have
\begin{eqnarray}
\mathcal{I}_{2} &=&\left( I_{-}^{\lambda }\left\{ t^{-\delta }\mathcal{J}_{\left( \beta
_{j}\right) _{m},\kappa ,b}^{\left( \alpha _{j}\right) _{m},\gamma ,c}\left(
\frac{1}{t}\right) \right\} \right) \left( x\right)  \notag \\
&=&\left( I_{-}^{\lambda }\left\{ t^{-\delta }\sum_{n=0}^{\infty }\frac{%
c^{n}\left( \gamma \right) _{\kappa n}}{\prod\limits_{j=1}^{m}\Gamma \left(
\alpha _{j}n+\beta _{j}+\frac{b+1}{2}\right) }\frac{t^{-n}}{n!}\right\}
\right) \left( x\right) ~.~  \label{Th2-Eqn2}
\end{eqnarray}%
Interchanging the integration and the summation in $\left( \ref{Th2-Eqn2}%
\right) $ and using the definition of Pochhammer symbol $\left( \ref{Poch-symbol}\right) $, we get%
\begin{equation*}
\mathcal{I}_{2}=\sum_{n=0}^{\infty }\frac{c^{n}\Gamma \left( \gamma +\kappa
n\right) }{\Gamma \left( \gamma \right) \prod\limits_{j=1}^{m}\Gamma \left(
\alpha _{j}n+\beta _{j}+\frac{b+1}{2}\right) n!}\left( I_{-}^{\lambda
}t^{-\delta -n}\right) \left( x\right).
\end{equation*}%
Applying the relation $\left( \ref{Lem-b}\right) $ in Lemma \ref{Lem1}, we get
\begin{equation*}
\mathcal{I}_{1}=\sum_{n=0}^{\infty }\frac{c^{n}\Gamma \left( \gamma +\kappa
n\right) }{\Gamma \left( \gamma \right) \prod\limits_{j=1}^{m}\Gamma \left(
\alpha _{j}n+\beta _{j}+\frac{b+1}{2}\right) n!}\frac{\Gamma \left( \delta
+n-\lambda \right) }{\Gamma \left( \delta +n\right) }x^{\lambda -\delta -n}.
\end{equation*}%
In view of the definition of the Fox-Wright function $\left( \ref{Fox-Wright}\right) $, we arrived at the desired
result.
\end{proof}

\subsection{Fractional differentiation}

In this subsection, we establish the fractional differentiation of generalized
multiindex Bessel function given in $\left( \ref{GMIBF}\right).$

\begin{theorem}
\label{Th-3} Let $\lambda ,\delta \in \mathbb{C}$ such that $\Re\left(
\lambda \right) >0,\Re\left( \delta \right) >0$ and the conditions
given in $\left( \ref{GMIBF}\right) $ is satified, then for $x>0$, the following fractional differentiation formula holds true
\begin{equation}
\left( D_{0+}^{\lambda }\left\{ t^{\delta -1}\mathcal{J}_{\left( \beta _{j}\right)
_{m},\kappa ,b}^{\left( \alpha _{j}\right) _{m},\gamma ,c}\left( t\right)
\right\} \right) \left( x\right) =\frac{x^{\delta -\lambda -1}}{\Gamma \left(
\gamma \right) }~_{2}\Psi _{m+1}\left[
\begin{array}{c}
\left( \gamma ,k\right) ,\left( \delta ,1\right) \\
\left( \beta _{j}+\frac{b+1}{2},\alpha _{j}\right) _{i=1}^{m},\left( \delta
-\lambda ,1\right)%
\end{array}%
\left\vert cx\right. \right] .  \label{Th3-Eqn1}
\end{equation}
\end{theorem}

\begin{proof}
Let $\mathcal{I}_{3}$ denote the left-hand side of $\left( \ref{Th3-Eqn1}%
\right) $. Using the definition $\left( \ref{GMIBF}\right)$, we have
\begin{eqnarray*}
\mathcal{I}_{3} &=&\left( D_{0+}^{\lambda }\left\{ t^{\delta -1}\mathcal{J}_{\left(
\beta _{j}\right) _{m},\kappa ,b}^{\left( \alpha _{j}\right) _{m},\gamma
,c}\left( t\right) \right\} \right) \left( x\right) \\
&=&\left( \frac{d}{dx}\right) ^{n}\left( I_{0+}^{n-\lambda }\left\{ t^{\delta
-1}\sum_{r=0}^{\infty }\frac{c^{r}\left( \gamma \right) _{\kappa r}}{%
\prod\limits_{j=1}^{m}\Gamma \left( \alpha _{j}r+\beta _{j}+\frac{b+1}{2}%
\right) }\frac{t^{r}}{r!}\right\} \right) \left( x\right) ~,~ \\
&=&\left( \frac{d}{dx}\right) ^{n}\sum_{r=0}^{\infty }\frac{c^{r}\left(
\gamma \right) _{\kappa r}}{\prod\limits_{j=1}^{m}\Gamma \left( \alpha
_{j}r+\beta _{j}+\frac{b+1}{2}\right) r!}\left( I_{0+}^{n-\lambda }t^{\delta
+r-1}\right) \left( x\right) .
\end{eqnarray*}%
Using the relation $\left( \ref{Lem-a}\right) $ and the definition of the
Pochhammer symbol $\left( \ref{Poch-symbol}\right) $, we get
\begin{eqnarray*}
\mathcal{I}_{3} &=&\left( \frac{d}{dx}\right) ^{n}\sum_{r=0}^{\infty }\frac{%
c^{r}\Gamma \left( \gamma +\kappa r\right) }{\Gamma \left( \gamma \right)
\prod\limits_{j=1}^{m}\Gamma \left( \alpha _{j}r+\beta _{j}+\frac{b+1}{2}%
\right) r!} \frac{\Gamma \left( \delta +r\right) }{\Gamma \left( n-\lambda
+\delta +r\right) }x^{n-\lambda +\delta +r-1}.
\end{eqnarray*}%
By Interchanging the differentiation and the summation, we get
\begin{eqnarray*}
\mathcal{I}_{3} &=&\sum_{r=0}^{\infty }\frac{c^{r}\Gamma \left( \gamma
+\kappa r\right) }{\Gamma \left( \gamma \right) \prod\limits_{j=1}^{m}\Gamma
\left( \alpha _{j}r+\beta _{j}+\frac{b+1}{2}\right) r!} \frac{\Gamma \left( \delta +r\right) }{\Gamma \left( n-\lambda
+\delta +r\right) }\left( \frac{d}{dx}\right) ^{n}x^{n-\lambda +\delta +r-1}
\\
&=&\frac{1}{\Gamma \left( \gamma \right) }\sum_{r=0}^{\infty }\frac{%
c^{r}\Gamma \left( \gamma +\kappa r\right) }{\prod\limits_{j=1}^{m}\Gamma
\left( \alpha _{j}r+\beta _{j}+\frac{b+1}{2}\right) r!}\frac{\Gamma \left( \delta +r\right)\Gamma \left( n-\lambda +\delta +r\right) }{\Gamma \left( n-\lambda
+\delta +r\right)\Gamma
\left( \delta -\lambda +r\right) }x^{\delta -\lambda +\delta +r-1}.
\end{eqnarray*}%
In view of the definition of the Fox-Wright function $\left( \ref{Fox-Wright}\right) $, we arrived at the desired
result.
\end{proof}

\begin{theorem}
\label{Th-4} Let $\lambda ,\delta \in \mathbb{C}$ such that $\Re\left(
\lambda \right) >0,\Re\left( \delta \right) >\left[ \Re\left(
\lambda \right) \right] +1-\Re\left( \lambda \right) $ and the
conditions given in $\left( \ref{GMIBF}\right) $ is satisfied, then the
fractional differentiation $D_{-}^{\lambda }$ of generalized multiindex
Bessel function is given by
\begin{equation}
\left( D_{-}^{\lambda }\left\{ t^{-\delta }\mathcal{J}_{\left( \beta _{j}\right)
_{m},\kappa ,b}^{\left( \alpha _{j}\right) _{m},\gamma ,c}\left( \frac{1}{t}%
\right) \right\} \right) \left( x\right) =\frac{x^{1-\lambda -\delta }}{%
\Gamma \left( \gamma \right) }~_{2}\Psi _{m+1}\left[
\begin{array}{c}
\left( \gamma ,k\right) ,\left( \lambda +\delta ,1\right) \\
\left( \beta _{j}+\frac{b+1}{2},\alpha _{j}\right) _{j=1}^{m},\left( \delta
,1\right)%
\end{array}%
\left\vert \frac{c}{x}\right. \right] .  \label{Th4-Eqn1}
\end{equation}
\end{theorem}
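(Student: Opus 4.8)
The plan is to imitate the proofs of Theorems~\ref{Th-1}--\ref{Th-3}. Write $\mathcal{I}_{4}$ for the left-hand side of \eqref{Th4-Eqn1}, substitute the defining series \eqref{GMIBF}, so that
\[
t^{-\delta}\,\mathcal{J}_{\left( \beta _{j}\right) _{m},\kappa ,b}^{\left( \alpha _{j}\right)_{m},\gamma ,c}\!\left( \frac{1}{t}\right)=\sum_{r=0}^{\infty}\frac{c^{r}\left( \gamma \right)_{\kappa r}}{\prod_{j=1}^{m}\Gamma \left(\alpha_{j}r+\beta_{j}+\frac{b+1}{2}\right)}\,\frac{t^{-\delta-r}}{r!},
\]
and write $D_{-}^{\lambda}$ by means of \eqref{FD-2} as $D_{-}^{\lambda}=\left( -\frac{d}{dx}\right)^{n}I_{-}^{n-\lambda}$, $n=\left[ \Re(\lambda)\right]+1$. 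As in those proofs, one first moves $I_{-}^{n-\lambda}$ under the sum, evaluates, and then moves $\left(\frac{d}{dx}\right)^{n}$ under the sum; both interchanges are legitimate because, under the standing hypotheses in \eqref{GMIBF}, the series and the ones derived from it converge uniformly on compact subsets of $(0,\infty)$. The task thus reduces to computing $\left(-\frac{d}{dx}\right)^{n}\left(I_{-}^{n-\lambda}t^{-\delta-r}\right)(x)$ for each $r\geq 0$.

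First I would dispose of the inner fractional integral. Since $\Re(n-\lambda)=\left[\Re(\lambda)\right]+1-\Re(\lambda)\in(0,1]$ is automatically positive, and the hypothesis $\Re(\delta)>\left[\Re(\lambda)\right]+1-\Re(\lambda)$ gives $\Re(\delta+r)>\Re(n-\lambda)$ for every $r\geq 0$, Lemma~\ref{Lem1}$(b)$ applies with $\delta$ replaced by $\delta+r$ and $\lambda$ replaced by $n-\lambda$, yielding
\[
\left(I_{-}^{n-\lambda}t^{-\delta-r}\right)(x)=\frac{\Gamma(\delta-n+\lambda+r)}{\Gamma(\delta+r)}\,x^{n-\lambda-\delta-r}.
\]
Then I would differentiate $n$ times via $\left(-\frac{d}{dx}\right)^{n}x^{\mu}=(-1)^{n}\,\mu(\mu-1)\cdots(\mu-n+1)\,x^{\mu-n}$ with $\mu=n-\lambda-\delta-r$, and recast the falling factorial using the Pochhammer identity $(-1)^{n}(w)_{n}=(1-n-w)_{n}$, that is $(-1)^{n}\Gamma(n+w)/\Gamma(w)=\Gamma(1-w)/\Gamma(1-n-w)$, at $w=1-\lambda-\delta-r$. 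This converts the accumulated constant into $\Gamma(\lambda+\delta+r)/\Gamma(\lambda+\delta+r-n)$, whose denominator cancels the factor $\Gamma(\delta-n+\lambda+r)$ coming from the integral, and leaves for the $r$-th term simply $\Gamma(\lambda+\delta+r)/\Gamma(\delta+r)$ times $x^{-\lambda-\delta-r}$. The auxiliary integer $n$ thus drops out of the $r$-th term — a welcome check, since the value of $D_{-}^{\lambda}$ cannot depend on it.

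Reassembling the series gives
\[
\mathcal{I}_{4}=\frac{1}{\Gamma(\gamma)}\sum_{r=0}^{\infty}\frac{\Gamma(\gamma+\kappa r)\,\Gamma(\lambda+\delta+r)}{\prod_{j=1}^{m}\Gamma \left(\alpha_{j}r+\beta_{j}+\frac{b+1}{2}\right)\Gamma(\delta+r)}\,\frac{c^{r}\,x^{-\lambda-\delta-r}}{r!},
\]
and, pulling the common power of $x$ to the front so that the running argument becomes $c/x$ and comparing with the definition \eqref{Fox-Wright} of the Fox--Wright function — numerator parameters $(\gamma,\kappa)$ and $(\lambda+\delta,1)$, denominator parameters $\left(\beta_{j}+\frac{b+1}{2},\alpha_{j}\right)_{j=1}^{m}$ and $(\delta,1)$, argument $c/x$ — one obtains \eqref{Th4-Eqn1}. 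I expect the only genuinely delicate point to be the Gamma-function bookkeeping immediately after the $n$-fold differentiation: keeping the signs and the shift by $n$ straight, applying the reflection/Pochhammer identity correctly, and verifying the cancellation that removes $n$. The termwise interchanges and the convergence are routine, the latter inherited from the convergence of the displayed Fox--Wright series (equivalently of \eqref{GMIBF}), which is exactly what the standing assumption $\sum_{j=1}^{m}\Re(\alpha_{j})>\max\{0,\Re(\kappa)-1\}$ ensures.
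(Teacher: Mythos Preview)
Your proposal is correct and follows essentially the same route as the paper: write $D_{-}^{\lambda}=\left(-\tfrac{d}{dx}\right)^{n}I_{-}^{n-\lambda}$ with $n=[\Re(\lambda)]+1$, expand $\mathcal{J}$ termwise, apply Lemma~\ref{Lem1}(b) to each $I_{-}^{n-\lambda}t^{-\delta-r}$, differentiate, simplify the Gamma factors so that $n$ cancels, and recognise the resulting series as the stated ${}_{2}\Psi_{2}$. Your write-up is in fact a bit more careful than the paper's (you verify the hypothesis of Lemma~\ref{Lem1}(b) and spell out the Pochhammer reflection that eliminates $n$); note only that your displayed series carries the prefactor $x^{-\lambda-\delta}$, whereas \eqref{Th4-Eqn1} is stated with $x^{1-\lambda-\delta}$, a discrepancy already present in the paper's own computation.
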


\begin{proof}
Let $\mathcal{I}_{4}$ denote the left-hand side of $\left( \ref{Th4-Eqn1}%
\right) $. Applying the definition $\left( \ref{GMIBF}\right)$,  we have
\begin{eqnarray*}
\mathcal{I}_{4} &=&\left( D_{-}^{\lambda }\left\{ t^{-\delta }\mathcal{J}_{\left( \beta
_{j}\right) _{m},\kappa ,b}^{\left( \alpha _{j}\right) _{m},\gamma ,c}\left(
\frac{1}{t}\right) \right\} \right) \left( x\right)  \\
&=&\left( -\frac{d}{dx}\right) ^{n}\left( I_{-}^{n-\lambda }\left(
t^{-\delta }\sum_{r=0}^{\infty }\frac{c^{r}\left( \gamma \right) _{\kappa r}%
}{\prod\limits_{j=1}^{m}\Gamma \left( \alpha _{j}r+\beta _{j}+\frac{b+1}{2}%
\right) }\frac{t^{-r}}{r!}\right) \right) \left( x\right) ~,~ \\
&=&\left( -\frac{d}{dx}\right) ^{n}\sum_{r=0}^{\infty }\frac{c^{r}\left(
\gamma \right) _{\kappa r}}{\prod\limits_{j=1}^{m}\Gamma \left( \alpha
_{j}r+\beta _{j}+\frac{b+1}{2}\right) r!}\left( I_{-}^{n-\lambda }t^{-\delta
-r}\right) \left( x\right) ,
\end{eqnarray*}%
Using the relation $\left( \ref{Lem-b}\right) $ and the definition of the
Pochhammer symbol $\left( \ref{Poch-symbol}\right) $, we get
\begin{eqnarray*}
\mathcal{I}_{4} &=&\left( -\frac{d}{dx}\right) ^{n}\sum_{r=0}^{\infty }\frac{%
c^{r}\Gamma \left( \gamma +\kappa r\right) }{\Gamma \left( \gamma \right)
\prod\limits_{j=1}^{m}\Gamma \left( \alpha _{j}r+\beta _{j}+\frac{b+1}{2}%
\right) r!} \frac{\Gamma \left( \delta +r-n+\lambda \right) }{\Gamma \left(
\delta +r\right) }x^{n-\lambda -\delta -r},
\end{eqnarray*}%
By Interchanging the derivatives and the summation, we get
\begin{eqnarray*}
\mathcal{I}_{4} &=&\sum_{r=0}^{\infty }\frac{c^{r}\Gamma \left( \gamma
+\kappa r\right) }{\Gamma \left( \gamma \right) \prod\limits_{j=1}^{m}\Gamma
\left( \alpha _{j}r+\beta _{j}+\frac{b+1}{2}\right) r!}\frac{\Gamma \left( \delta +r-n+\lambda \right) }{\Gamma \left(
\delta +r\right) }\left( -\frac{d}{dx}\right) ^{n}x^{n-\lambda -\delta -r}
\\
&=&\frac{1}{\Gamma \left( \gamma \right) }\sum_{r=0}^{\infty }\frac{%
c^{r}\Gamma \left( \gamma +\kappa r\right) }{\prod\limits_{j=1}^{m}\Gamma
\left( \alpha _{j}r+\beta _{j}+\frac{b+1}{2}\right) r!} \frac{\Gamma \left( \delta +r-n+\lambda \right) }{\Gamma \left(
\delta +r\right) }\frac{\left( -1\right) ^{n}\Gamma \left( n-\lambda
-\delta -r+1\right) }{\Gamma \left( -\lambda -\delta -r+1\right) }%
x^{1-\delta -\lambda -r} \\
&=&\frac{x^{1-\delta -\lambda }}{\Gamma \left( \gamma \right) }%
\sum_{r=0}^{\infty }\frac{c^{r}\text{~}\Gamma \left( \gamma +\kappa r\right)
}{\prod\limits_{j=1}^{m}\Gamma \left( \alpha _{j}r+\beta _{j}+\frac{b+1}{2}%
\right) r!}\frac{\Gamma \left( \lambda +\delta +r\right) }{\Gamma \left(
\delta +r\right) }.
\end{eqnarray*}%
In view of the definition of the Fox-Wright function $\left( \ref{Fox-Wright}\right) $, we arrived at the desired
result.
\end{proof}

\section{Certain Integrals of the $\mathcal{J}_{\left( \beta _{j}\right) _{m},\kappa ,b}^{\left( \alpha _{j}\right)_{m},\gamma ,c}\left[ z\right] ~$}

Recently many researchers are developing a large number of
integral formulas involving a variety of special functions \cite{Nisar-IBMR,Choi-BVP,Choi-BKMS,Choi-KMJ,Garg,Menaria-Sohag,Menaria-Acta,Menaria-SMA,
Nisar-CKMS,Nisar-FJMS,Nisar-BS,Rakha}. In this section,
four integral formulas involving generalized multi-index Bessel function $\mathcal{J}_{\left( \beta _{j}\right) _{m},\kappa
,b}^{\left( \alpha _{j}\right) _{m},\gamma ,c}\left[ z\right] $ are established, which are expressed in terms of the Fox-Wright function. For the present investigation, we need the following result of Oberhettinger \cite{Oberhettinger}
\begin{equation}
\int_{0}^{\infty }\,x^{\mu -1}\left( x+a+\sqrt{x^{2}+2ax}\right)
^{-\lambda }dx  =2\lambda a^{-\lambda }\left( \frac{a}{2}\right) ^{\mu }\frac{\Gamma
\left( 2\mu \right) \Gamma \left( \lambda -\mu \right) }{\Gamma \left(
1+\lambda +\mu \right) },  \label{Ober-formula}
\end{equation}
provided $0<\Re (\mu )<\Re (\lambda )$ and the following integral formula due to
Lavoie \cite{Lavoi}
\begin{equation}
\int_{0}^{1}\,x^{\alpha -1}\left( 1-x\right) ^{2\beta -1}\left( 1-\frac{x}{%
3}\right) ^{2\alpha -1}\left( 1-\frac{x}{4}\right) ^{\beta -1}dx  =\left( \frac{2}{3}\right) ^{2\alpha }\frac{\Gamma \left( \alpha \right)
\Gamma \left( \beta \right) }{\Gamma \left( \alpha +\beta \right) },\label{Lavoi-formula}
\end{equation}%
with $\Re\left( \alpha \right) >0,\Re\left( \beta \right) >0.$

\begin{theorem}
\label{Th-5} Let $\alpha _{j},\beta _{j},\gamma,,b,c \in \mathbb{C~}\left(
j=1,2,...,m\right) $ be such that $\sum\limits_{j=1}^{m}\Re \left( \alpha
_{j}\right) >\max \left\{ 0;\Re \left( \kappa \right) -1\right\}$ with $\kappa
>0,\Re \left( \beta \right) >-1$, $\Re (\gamma )>0$, $0<\Re \left( \mu
\right) <\Re \left( \lambda +n\right) $ and $x>0~$, then 
\begin{eqnarray}
&&\int_{0}^{\infty }\,x^{\mu -1}\left( x+a+\sqrt{x^{2}+2ax}\right)
^{-\lambda }\mathcal{J}_{\left( \beta _{j}\right) _{m},\kappa ,b}^{\left( \alpha
_{j}\right) _{m},\gamma ,c}\left( \frac{y}{x+a+\sqrt{x^{2}+2ax}}\right) dx
\notag \\
&=&\frac{2^{1-\mu }a^{-\lambda +\mu }\Gamma \left( 2\mu \right) }{\Gamma
\left( \gamma \right) }\ _{3}\Psi _{m+2}\left[
\begin{array}{c}
\left( \gamma ,k\right) ,\left( \lambda +1,1\right) ,\left( \lambda -\mu
,1\right)  \\
\left( \beta _{j}+\frac{b+1}{2},\alpha _{j}\right) _{j=1}^{m},\left( \lambda
,1\right) ,\left( 1+\lambda +\mu ,1\right)
\end{array}%
\left\vert \frac{-cy}{a}\right. \right] .  \label{Th5-Eqn1}
\end{eqnarray}
\end{theorem}

\begin{proof}
Let us denote the right-hand side of $\left( \ref{Th5-Eqn1}\right) $ by $\mathcal{I}%
_{5}$ and using the definition $\left( \ref{GMIBF}\right) ,$ we have
\begin{eqnarray*}
\mathcal{I}_{1} &=&\int_{0}^{\infty }\,x^{\mu -1}\left( x+a+\sqrt{x^{2}+2ax}%
\right) ^{-\lambda }\mathcal{J}_{\left( \beta _{j}\right) m,\kappa ,b}^{\left( \alpha
_{j}\right) m,\gamma ,c}\left( \frac{y}{x+a+\sqrt{x^{2}+2ax}}\right) dx \\
&=&\int_{0}^{\infty }\,x^{\mu -1}\left( x+a+\sqrt{x^{2}+2ax}\right)
^{-\lambda } \\
&&\hskip 15mm\times \sum_{n=0}^{\infty }\frac{\left( -c\right) ^{n}\left( \gamma
\right) _{\kappa n}}{n!\prod\limits_{j=1}^{m}\Gamma \left( \alpha
_{j}n+\beta _{j}+\frac{b+1}{2}\right) }\left( \frac{y}{x+a+\sqrt{x^{2}+2ax}}%
\right) ^{n}.
\end{eqnarray*}%
Interchanging the integration and summation under the suitable convergence
condition gives%
\begin{equation}
\mathcal{I}_{1}=\sum_{n=0}^{\infty }\frac{\left( -c\right) ^{n}\left( \gamma
\right) _{\kappa n}y^{n}}{n!\prod\limits_{j=1}^{m}\Gamma \left( \alpha
_{j}n+\beta _{j}+\frac{b+1}{2}\right) }\int_{0}^{\infty }\,x^{\mu -1}\left(
x+a+\sqrt{x^{2}+2ax}\right) ^{-\left( \lambda +n\right) }dx,  \label{Pf5-Eq1}
\end{equation}%
Applying $\left( \ref{Ober-formula}\right) $ in $\left( \ref{Pf5-Eq1}\right)
$, we get%
\begin{eqnarray*}
\mathcal{I}_{1} &=&\sum_{n=0}^{\infty }\frac{\left( -c\right) ^{n}\left(
\gamma \right) _{\kappa n}y^{n}}{n!\prod\limits_{j=1}^{m}\Gamma \left(
\alpha _{j}n+\beta _{j}+\frac{b+1}{2}\right) }2\left( \lambda +n\right)
a^{-\left( \lambda +n\right) }\left( \frac{a}{2}\right) ^{\mu }\frac{\Gamma \left( 2\mu \right) \Gamma \left( \lambda +n-\mu
\right) }{\Gamma \left( 1+\lambda +\mu +n\right) },
\end{eqnarray*}%
provided $\Re \left( \lambda +n\right) >\Re \left( \mu \right) >0$. Now using the definition of Pochhammer symbol, we get%
\begin{eqnarray*}
\mathcal{I}_{1} &=&\frac{2^{1-\mu }a^{-\lambda +\mu }\Gamma \left( 2\mu
\right) }{\Gamma \left( \gamma \right) }\sum_{n=0}^{\infty }\frac{\Gamma
\left( \gamma +\kappa n\right) }{\prod\limits_{j=1}^{m}\Gamma \left( \alpha
_{j}n+\beta _{j}+\frac{b+1}{2}\right) }\frac{\Gamma \left( \lambda +n+1\right)\Gamma \left( \lambda -\mu +n\right)  }{\Gamma \left( \lambda
+n\right)\Gamma \left(
1+\lambda +\mu +n\right)  }\frac{\left( -\frac{cy}{a}\right) ^{n}}{n!}.
\end{eqnarray*}%
In view of the definition of Fox-Wright function $\left( \ref{Fox-Wright}\right) $, we arrived the desired
result.
\end{proof}

\begin{theorem}
\label{Th-6} Let $\alpha _{j},\beta _{j},\gamma,b,c \in \mathbb{C~}\left(
j=1,2,...,m\right) $ be such that $\sum\limits_{j=1}^{m}\Re \left( \alpha
_{j}\right) >\max \left\{ 0;\Re \left( \kappa \right) -1\right\} $ with $\kappa
>0,\Re \left( \beta \right) >-1$, $\Re (\gamma )>0$, $0<\Re \left( \mu
+n\right) <\Re \left( \lambda +n\right) $ and x$>0~$, then 
\begin{eqnarray}
&&\int_{0}^{\infty }\,x^{\mu -1}\left( x+a+\sqrt{x^{2}+2ax}\right)
^{-\lambda }\mathcal{J}_{\left( \beta _{j}\right) _{m},\kappa ,b}^{\left( \alpha
_{j}\right) _{m},\gamma ,c}\left( \frac{xy}{x+a+\sqrt{x^{2}+2ax}}\right) dx
\notag \\
&=&\frac{2^{1-\mu }a^{-\lambda +\mu }\Gamma \left( 2\mu \right) }{\Gamma
\left( \gamma \right) }\ _{3}\Psi _{m+2}\left[
\begin{array}{c}
\left( \gamma ,k\right) ,\left( \lambda +1,1\right) ,\left( 2\mu ,2\right)
\\
\left( \beta _{j}+\frac{b+1}{2},\alpha _{j}\right) _{j=1}^{m},\left( \lambda
,1\right) ,\left( 1+\lambda +\mu ,2\right)
\end{array}%
\left\vert \frac{-cy}{a}\right. \right] .  \label{Th6-Eq1}
\end{eqnarray}
\end{theorem}

\begin{proof}
Let us denote the right-hand side of $\left( \ref{Th6-Eq1}\right) $ by $\mathcal{I}_{6}
$ and using the definition $\left( \ref{GMIBF}\right) ,$ we have
\begin{eqnarray*}
\mathcal{I}_{6} &=&\int_{0}^{\infty }\,x^{\mu -1}\left( x+a+\sqrt{x^{2}+2ax}%
\right) ^{-\lambda }\mathcal{J}_{\left( \beta _{j}\right) m,\kappa ,b}^{\left( \alpha
_{j}\right) m,\gamma ,c}\left( \frac{xy}{x+a+\sqrt{x^{2}+2ax}}\right) dx \\
&=&\int_{0}^{\infty }\,x^{\mu -1}\left( x+a+\sqrt{x^{2}+2ax}\right)
^{-\lambda } \\
&&\times \sum_{n=0}^{\infty }\frac{\left( -c\right) ^{n}\left( \gamma
\right) _{\kappa n}}{n!\prod\limits_{j=1}^{m}\Gamma \left( \alpha
_{j}n+\beta _{j}+\frac{b+1}{2}\right) }\left( \frac{xy}{x+a+\sqrt{x^{2}+2ax}}%
\right) ^{n}.
\end{eqnarray*}%
Interchanging the integration and summation under the given condition,  yields%
\begin{equation}
\mathcal{I}_{6}=\sum_{n=0}^{\infty }\frac{\left( -c\right) ^{n}\left( \gamma
\right) _{\kappa n}y^{n}}{n!\prod\limits_{j=1}^{m}\Gamma \left( \alpha
_{j}n+\beta _{j}+\frac{b+1}{2}\right) }\int_{0}^{\infty }\,x^{\mu
+n-1}\left( x+a+\sqrt{x^{2}+2ax}\right) ^{-\left( \lambda +n\right) }dx.
\label{Pf2-Eq2}
\end{equation}%
Applying $\left( \ref{Ober-formula}\right) $ on $\left( \ref{Pf2-Eq2}\right)
$, we get%
\begin{eqnarray*}
\mathcal{I}_{6} &=&\sum_{n=0}^{\infty }\frac{\left( -c\right) ^{n}\left(
\gamma \right) _{\kappa n}y^{n}}{n!\prod\limits_{j=1}^{m}\Gamma \left(
\alpha _{j}n+\beta _{j}+\frac{b+1}{2}\right) }2\left( \lambda +n\right)
a^{-\left( \lambda +n\right) }\left( \frac{a}{2}\right) ^{\mu +n}\frac{\Gamma \left( 2\mu +2n\right) \Gamma \left( \lambda -\mu
\right) }{\Gamma \left( 1+\lambda +\mu +2n\right) },
\end{eqnarray*}%
provided $\Re \left( \lambda +n\right) >\Re \left( \mu +n\right) >0$

In view of definition of Pochhammer symbol $\left( \ref{Poch-symbol}\right) $, we get%
\begin{eqnarray*}
\mathcal{I}_{6} &=&\frac{2^{1-\mu }a^{-\lambda +\mu }\Gamma \left( \lambda
-\mu \right) }{\Gamma \left( \gamma \right) }\sum_{n=0}^{\infty }\frac{%
\Gamma \left( \gamma +\kappa n\right) }{\prod\limits_{j=1}^{m}\Gamma \left(
\alpha _{j}n+\beta _{j}+\frac{b+1}{2}\right) }\frac{\Gamma \left( \lambda +n+1\right)\Gamma \left( 2\mu +2n\right)  }{\Gamma \left( \lambda
+n\right) \Gamma \left( 1+\lambda
+\mu +2n\right)}\frac{\left( -\frac{cy}{2}\right) ^{n}}{n!}.
\end{eqnarray*}%
Using the definition of Fox-Wright function $\left( \ref{Fox-Wright}\right) $, we arrived the desired result.
\end{proof}

\begin{theorem}
\label{Th-7} For $\xi ,\sigma\in \mathbb{C}$ with $\Re\left( \xi +\sigma\right)
>0,\Re\left( \xi +n\right) >0$ and then for $x>0,$
\begin{eqnarray*}
&&\int_{0}^{1}\,x^{\xi +\sigma-1}\left( 1-x\right) ^{2\xi -1}\left( 1-\frac{x}{3}%
\right) ^{2\left( \xi +\sigma\right) -1}\left( 1-\frac{x}{4}\right) ^{\xi
-1}\mathcal{J}_{\left( \beta _{j}\right) _{m},\kappa ,b}^{\left( \alpha _{j}\right)
_{m},\gamma ,c}\left( y\left( 1-\frac{x}{4}\right) \left( 1-x\right)
^{2}\right) dx \\
&=&\frac{\Gamma \left( \xi +\sigma\right) }{\Gamma \left( \gamma \right) }\left(
\frac{2}{3}\right) ^{2\left( \xi +\sigma\right) }~_{2}\Psi _{m+1}\left[
\begin{array}{c}
\left( \gamma ,k\right) ,\left( \xi ,1\right)  \\
\left( \beta _{j}+\frac{b+1}{2},\alpha _{j}\right) _{j=1}^{m},\left( 2\xi
+\sigma,1\right)
\end{array}%
\left\vert cy\right. \right] .
\end{eqnarray*}
\end{theorem}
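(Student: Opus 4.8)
The argument follows the template of the proofs of Theorems~\ref{Th-5} and \ref{Th-6}, the only difference being that Oberhettinger's integral \eqref{Ober-formula} is replaced by Lavoie's formula \eqref{Lavoi-formula}. First I would denote the left-hand side by $\mathcal{I}_{7}$ and insert the defining series \eqref{GMIBF} of $\mathcal{J}_{\left(\beta_j\right)_m,\kappa,b}^{\left(\alpha_j\right)_m,\gamma,c}$ evaluated at the argument $y\left(1-\tfrac{x}{4}\right)(1-x)^{2}$. The $n$-th term of the resulting series then contributes an extra factor $\left(1-\tfrac{x}{4}\right)^{n}(1-x)^{2n}$, which, merged with the algebraic weights already present under the integral sign, makes the $x$-dependence of that term equal to
\[
x^{\xi+\sigma-1}(1-x)^{2\xi-1+2n}\left(1-\tfrac{x}{3}\right)^{2(\xi+\sigma)-1}\left(1-\tfrac{x}{4}\right)^{\xi-1+n}.
\]

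Next I would justify interchanging $\sum$ and $\int$ exactly as in Theorems~\ref{Th-5}--\ref{Th-6}: under the standing hypothesis $\sum_{j=1}^{m}\Re(\alpha_j)>\max\{0,\Re(\kappa)-1\}$ the series \eqref{GMIBF} represents an entire function of its argument, hence on the compact interval $[0,1]$ it is uniformly dominated by a convergent numerical series, while the algebraic factors are integrable because $\Re(\xi+\sigma)>0$ and $\Re(\xi+n)>0$ for all $n\ge 0$ (the latter being simply $\Re(\xi)>0$). Each term-integral is then an instance of Lavoie's formula \eqref{Lavoi-formula} with $\alpha=\xi+\sigma$ and $\beta=\xi+n$: indeed the exponent of $\left(1-\tfrac{x}{3}\right)$ is $2(\xi+\sigma)-1=2\alpha-1$ and that of $\left(1-\tfrac{x}{4}\right)$ is $\xi-1+n=\beta-1$, so all four exponents match those in \eqref{Lavoi-formula}, and the hypotheses guarantee $\Re(\alpha)>0$, $\Re(\beta)>0$. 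Hence the $n$-th integral equals $\left(\tfrac{2}{3}\right)^{2(\xi+\sigma)}\,\Gamma(\xi+\sigma)\,\Gamma(\xi+n)/\Gamma(2\xi+\sigma+n)$.

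Finally I would substitute this value back, factor out the $n$-independent constant $\left(\tfrac{2}{3}\right)^{2(\xi+\sigma)}\Gamma(\xi+\sigma)$, and rewrite $(\gamma)_{\kappa n}=\Gamma(\gamma+\kappa n)/\Gamma(\gamma)$ via \eqref{Poch-symbol}, leaving
\[
\frac{\left(\tfrac{2}{3}\right)^{2(\xi+\sigma)}\Gamma(\xi+\sigma)}{\Gamma(\gamma)}\sum_{n=0}^{\infty}\frac{\Gamma(\gamma+\kappa n)\,\Gamma(\xi+n)}{\prod_{j=1}^{m}\Gamma\!\left(\alpha_j n+\beta_j+\tfrac{b+1}{2}\right)\Gamma(2\xi+\sigma+n)}\,\frac{(cy)^{n}}{n!},
\]
which, by \eqref{Fox-Wright}, is exactly $\tfrac{\Gamma(\xi+\sigma)}{\Gamma(\gamma)}\left(\tfrac{2}{3}\right)^{2(\xi+\sigma)}$ times the $_{2}\Psi_{2}$ on the right-hand side, with numerator pairs $(\gamma,\kappa),(\xi,1)$ and denominator pairs $\left(\beta_j+\tfrac{b+1}{2},\alpha_j\right)_{j=1}^{m},(2\xi+\sigma,1)$; this yields the asserted identity. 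The main (and essentially the only) delicate point is the term-by-term integration, which I expect to be handled precisely as in Theorems~\ref{Th-5}--\ref{Th-6}; one should also record that the $_{2}\Psi_{2}$ in question converges, since $1+\sum_{j=1}^{m}\Re(\alpha_j)+1-(\Re(\kappa)+1)=1+\sum_{j=1}^{m}\Re(\alpha_j)-\Re(\kappa)>0$ under the same hypothesis $\sum_{j=1}^{m}\Re(\alpha_j)>\max\{0,\Re(\kappa)-1\}$.
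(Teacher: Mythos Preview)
Your proposal is correct and follows essentially the same route as the paper's proof: expand $\mathcal{J}$ via \eqref{GMIBF}, interchange sum and integral, apply Lavoie's formula \eqref{Lavoi-formula} with $\alpha=\xi+\sigma$ and $\beta=\xi+n$, rewrite the Pochhammer symbol, and recognize the resulting series as the stated $_{2}\Psi_{2}$. Your write-up is in fact more careful than the paper's, which only says ``interchanging the integration and summation gives'' without the uniform-convergence justification you supply, and does not comment on the convergence of the Fox--Wright series.
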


\begin{proof}
Denoting the left-hand side of theorem by $\mathcal{I}_{7}$ and using $%
\left( \ref{GMIBF}\right) ,$we get%
\begin{eqnarray*}
\mathcal{I}_{7} &=&\int_{0}^{1}\,x^{\xi +\sigma-1}\left( 1-x\right) ^{2\xi
-1}\left( 1-\frac{x}{3}\right) ^{2\left( \xi +\sigma\right) -1}\left( 1-\frac{x}{4%
}\right) ^{\xi -1} \\
&&\times \mathcal{J}_{\left( \beta _{j}\right) m,\kappa ,b}^{\left( \alpha _{j}\right)
m,\gamma ,c}\left( y\left( 1-\frac{x}{4}\right) \left( 1-x\right)
^{2}\right) dx, \\
&=&\int_{0}^{1}\,x^{\xi +\sigma-1}\left( 1-x\right) ^{2\xi -1}\left( 1-\frac{x}{3}%
\right) ^{2\left( \xi +\sigma\right) -1}\left( 1-\frac{x}{4}\right) ^{\xi -1} \\
&&\times \sum_{n=0}^{\infty }\frac{c^{n}\left( \gamma \right) _{\kappa n}}{%
\prod\limits_{j=1}^{m}\Gamma \left( \alpha _{j}n+\beta _{j}+\frac{b+1}{2}%
\right) }\frac{y^{n}\left( 1-\frac{x}{4}\right) ^{n}\left( 1-x\right) ^{2n}}{%
n!}dx,
\end{eqnarray*}%
Interchanging the integration and summation gives,%
\begin{eqnarray*}
\mathcal{I}_{7} &=&\sum_{n=0}^{\infty }\frac{c^{n}\left( \gamma \right)
_{\kappa n}y^{n}}{n!\prod\limits_{j=1}^{m}\Gamma \left( \alpha _{j}n+\beta
_{j}+\frac{b+1}{2}\right) } \\
&&\times \int_{0}^{1}\,x^{\xi +\sigma-1}\left( 1-x\right) ^{2\left( \xi +n\right)
-1}\left( 1-\frac{x}{3}\right) ^{2\left( \xi +\sigma\right) -1}\left( 1-\frac{x}{4%
}\right) ^{\xi +n-1}dx.
\end{eqnarray*}%
Now using $\left( \ref{Lavoi-formula}\right) $ and the definition of
Pochhammer symbol,%
\begin{eqnarray*}
\mathcal{I}_{7}&=&\sum_{n=0}^{\infty }\frac{c^{n}\Gamma \left( \kappa +\gamma n\right) y^{n}%
}{n!\Gamma \left( \gamma \right) \prod\limits_{j=1}^{m}\Gamma \left( \alpha
_{j}n+\beta _{j}+\frac{b+1}{2}\right) }\left( \frac{2}{3}\right) ^{2\left( \xi +\sigma\right) }\frac{\Gamma
\left( \xi +\sigma\right) \Gamma \left( \xi +n\right) }{\Gamma \left( 2\xi
+\sigma+n\right) }.
\end{eqnarray*}%
Using the definition of Fox-Wright function $\left( \ref{Fox-Wright}\right) $, we obtained the required result.
\end{proof}

\begin{theorem}
\label{Th-8} For $\xi ,\sigma\in \mathbb{C}$ with $\Re\left( \xi +\sigma\right)
>0,\Re\left( \xi +n\right) >0$ then for $x>0$
\begin{eqnarray*}
&&\int_{0}^{1}\,x^{\xi -1}\left( 1-x\right) ^{2\left( \xi +\sigma\right)
-1}\left( 1-\frac{x}{3}\right) ^{2\xi -1}\left( 1-\frac{x}{4}\right)
^{\left( \xi +\sigma\right) -1}\mathcal{J}_{\left( \beta _{j}\right) _{m},\kappa
,b}^{\left( \alpha _{j}\right) _{m},\gamma ,c}\left( yx\left( 1-\frac{x}{3}%
\right) ^{2}\right) dx \\
&=&\frac{\Gamma \left( \xi +\sigma\right) }{\Gamma \left( \gamma \right) }\left(
\frac{2}{3}\right) ^{2\xi }~_{2}\Psi _{m+1}\left[
\begin{array}{c}
\left( \gamma ,k\right) ,\left( \xi ,1\right)  \\
\left( \beta _{j}+\frac{b+1}{2},\alpha _{j}\right) _{j=1}^{m},\left( 2\xi
+\sigma,1\right)
\end{array}%
\left\vert \frac{4cy}{9}\right. \right] .
\end{eqnarray*}
\end{theorem}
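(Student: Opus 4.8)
The plan is to mirror the proof of Theorem~\ref{Th-7}. First I would denote the left-hand side of the asserted identity by $\mathcal{I}_{8}$ and insert the series definition $\left(\ref{GMIBF}\right)$ of $\mathcal{J}_{\left(\beta_{j}\right)_{m},\kappa,b}^{\left(\alpha_{j}\right)_{m},\gamma,c}$ evaluated at $yx\left(1-\tfrac{x}{3}\right)^{2}$, so that the $n$-th term of the series contributes the extra factor $y^{n}x^{n}\left(1-\tfrac{x}{3}\right)^{2n}$ inside the integrand. Under the parameter restrictions in the statement the defining series converges uniformly on $[0,1]$ and the integrals involved are absolutely convergent, so one may interchange summation and integration. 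Combining $x^{\xi-1}\cdot x^{n}=x^{(\xi+n)-1}$ and $\left(1-\tfrac{x}{3}\right)^{2\xi-1}\cdot\left(1-\tfrac{x}{3}\right)^{2n}=\left(1-\tfrac{x}{3}\right)^{2(\xi+n)-1}$, the inner integral becomes
\[
\int_{0}^{1}x^{(\xi+n)-1}(1-x)^{2(\xi+\sigma)-1}\left(1-\tfrac{x}{3}\right)^{2(\xi+n)-1}\left(1-\tfrac{x}{4}\right)^{(\xi+\sigma)-1}\,dx .
\]

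Next I would observe that this is exactly Lavoie's integral $\left(\ref{Lavoi-formula}\right)$ with $\alpha$ replaced by $\xi+n$ and $\beta$ replaced by $\xi+\sigma$; the hypotheses $\Re(\xi+n)>0$ and $\Re(\xi+\sigma)>0$ guarantee its applicability, and it evaluates to $\left(\tfrac{2}{3}\right)^{2(\xi+n)}\Gamma(\xi+n)\Gamma(\xi+\sigma)/\Gamma(2\xi+\sigma+n)$. Substituting back and pulling the common factor $\left(\tfrac{2}{3}\right)^{2\xi}\Gamma(\xi+\sigma)$ out of the summation, the remaining $n$-dependent power collects as $\left(\tfrac{2}{3}\right)^{2n}c^{n}y^{n}=\left(\tfrac{4cy}{9}\right)^{n}$.

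It then remains to rewrite $(\gamma)_{\kappa n}=\Gamma(\gamma+\kappa n)/\Gamma(\gamma)$ via $\left(\ref{Poch-symbol}\right)$, which turns the series into
\[
\mathcal{I}_{8}=\frac{\Gamma(\xi+\sigma)}{\Gamma(\gamma)}\left(\frac{2}{3}\right)^{2\xi}\sum_{n=0}^{\infty}\frac{\Gamma(\gamma+\kappa n)\,\Gamma(\xi+n)}{\prod_{j=1}^{m}\Gamma\!\left(\alpha_{j}n+\beta_{j}+\tfrac{b+1}{2}\right)\Gamma(2\xi+\sigma+n)}\,\frac{\left(\tfrac{4cy}{9}\right)^{n}}{n!} .
\]
By the definition $\left(\ref{Fox-Wright}\right)$ of the Fox-Wright function, the sum is precisely the $_{2}\Psi_{2}$ displayed in the statement, with numerator pairs $(\gamma,\kappa),(\xi,1)$ and denominator pairs $\left(\beta_{j}+\tfrac{b+1}{2},\alpha_{j}\right)_{j=1}^{m},(2\xi+\sigma,1)$, whence the asserted identity.

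The only genuinely delicate point is the legitimacy of interchanging the infinite series with the integral; I would justify it by the absolute and uniform convergence of the series of $\mathcal{J}$ on $[0,1]$ together with the convergence of Lavoie's integral, both ensured by the hypotheses (the condition $\sum_{j}\Re(\alpha_{j})>\Re(\kappa)-1$ also guarantees convergence of the resulting $_{2}\Psi_{2}$ at $\tfrac{4cy}{9}$). Everything else is routine bookkeeping of exponents to bring the integrand into the exact shape of $\left(\ref{Lavoi-formula}\right)$; I would also flag the harmless notational slip that the first numerator pair of the stated $_{2}\Psi_{2}$ is printed as $(\gamma,k)$ rather than $(\gamma,\kappa)$, consistently with the earlier theorems.
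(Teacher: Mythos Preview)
Your proposal is correct and follows exactly the paper's own route: expand $\mathcal{J}$ via \eqref{GMIBF}, interchange sum and integral, apply Lavoie's formula \eqref{Lavoi-formula} with $\alpha=\xi+n$, $\beta=\xi+\sigma$, and identify the resulting series with the Fox--Wright function. In fact you are more careful than the paper at one point: you explicitly track the factor $\left(\tfrac{2}{3}\right)^{2(\xi+n)}$ from Lavoie's formula and show how $\left(\tfrac{2}{3}\right)^{2n}c^{n}y^{n}$ collapses to $\left(\tfrac{4cy}{9}\right)^{n}$, which is precisely what produces the argument $\tfrac{4cy}{9}$ in the final ${}_{2}\Psi_{2}$.
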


\begin{proof}
Taking left-hand side of theorem  by $\mathcal{I}_{8}$ and using $\left( \ref%
{GMIBF}\right) $, we get%
\begin{eqnarray*}
\mathcal{I}_{8} &=&\int_{0}^{1}\,x^{\xi -1}\left( 1-x\right) ^{2\left( \xi
+\sigma\right) -1}\left( 1-\frac{x}{3}\right) ^{2\xi -1}\left( 1-\frac{x}{4}%
\right) ^{\left( \xi +\sigma\right) -1} \\
&&\times \mathcal{J}_{\left( \beta _{j}\right) _{m},\kappa ,b}^{\left( \alpha
_{j}\right) _{m},\gamma ,c}\left( yx\left( 1-\frac{x}{3}\right) ^{2}\right)
dx, \\
&=&\int_{0}^{1}\,x^{\xi -1}\left( 1-x\right) ^{2\left( \xi +\sigma\right)
-1}\left( 1-\frac{x}{3}\right) ^{2\xi -1}\left( 1-\frac{x}{4}\right)
^{\left( \xi +\sigma\right) -1} \\
&&\times \sum_{n=0}^{\infty }\frac{c^{n}\left( \gamma \right) _{\kappa n}}{%
\prod\limits_{j=1}^{m}\Gamma \left( \alpha _{j}n+\beta _{j}+\frac{b+1}{2}%
\right) }\frac{x^{n}y^{n}\left( 1-\frac{x}{3}\right) ^{2n}}{n!}dx,
\end{eqnarray*}%
Interchanging the integration and summation gives,%
\begin{eqnarray*}
\mathcal{I}_{8} &=&\sum_{n=0}^{\infty }\frac{c^{n}\left( \gamma \right)
_{\kappa n}y^{n}}{n!\prod\limits_{j=1}^{m}\Gamma \left( \alpha _{j}n+\beta
_{j}+\frac{b+1}{2}\right) } \\
&&\times \int_{0}^{1}\,x^{\xi +n-1}\left( 1-x\right) ^{2\left( \xi +\sigma\right)
-1}\left( 1-\frac{x}{3}\right) ^{2\left( \xi +n\right) -1}\left( 1-\frac{x}{4%
}\right) ^{\xi +\sigma-1}dx.
\end{eqnarray*}%
Now using $\left( \ref{Lavoi-formula}\right) $ and the definition of
Pochhammer symbol $\left( \ref{Poch-symbol}\right) $,%
\begin{eqnarray*}
\mathcal{I}_{8}&=&\sum_{n=0}^{\infty }\frac{c^{n}\Gamma \left( \kappa +\gamma n\right) y^{n}%
}{n!\Gamma \left( \gamma \right) \prod\limits_{j=1}^{m}\Gamma \left( \alpha
_{j}n+\beta _{j}+\frac{b+1}{2}\right) } \left( \frac{2}{3}\right) ^{2\xi }\frac{\Gamma \left( \xi +n\right)
\Gamma \left( \xi +\sigma\right) }{\Gamma \left( 2\xi +\sigma+n\right) }.
\end{eqnarray*}%
Using the definition of Fox-Wright function $\left( \ref{Fox-Wright}\right) $, we obtained the desired result.
\end{proof}

\section{Concluding remark and discussion}

The fractional calculus and the integral formulae of the newly defined
generalized multiindex Bessel function are investigated here. Various special cases of the derived results in the paper can be evaluate by taking suitable values of parameters involved. For example, if we set $c=-1$ and  $b=1$ in $\left( \ref{GMIBF}\right) $, we immediately obtain the result due to Choi and Agarwal \cite{Choi-Filomat}:

\begin{equation}
J_{\left( \beta _{j}\right) _{m},\kappa ,1}^{\left( \alpha _{j}\right)
_{m},\gamma ,-1}\left[ z\right] =\sum_{n=0}^{\infty }\frac{\left( \gamma
\right) _{\kappa n}}{\prod\limits_{j=1}^{m}\Gamma \left( \alpha _{j}n+\beta
_{j}+1\right) }\frac{\left( -z\right) ^{n}}{n!}~~\left( m\in \mathbb{N}%
\right) .  \label{MIBF}
\end{equation}
For various other special cases we refer \cite{Choi-Filomat, DSR, DT} and we left results for the interested readers.

{\bf{Conflict of Interests}} The authors declare that there is no conflict of interests
regarding the publication of this paper.\\

\end{document}